\theoremstyle{plain}
\newtheorem{thm}{\protect\theoremname}[section]
  \theoremstyle{remark}
  \newtheorem*{notation*}{\protect\notationname}
  \theoremstyle{plain}
  \newtheorem{lem}[thm]{\protect\lemmaname}
  \theoremstyle{plain}
  \newtheorem{prop}[thm]{\protect\propositionname}
  \theoremstyle{remark}
  \newtheorem*{acknowledgement*}{\protect\acknowledgementname}
\newcommand{\lyxaddress}[1]{
\par {\raggedright #1
\vspace{1.4em}
\noindent\par}
}
  \providecommand{\acknowledgementname}{Acknowledgement}
  \providecommand{\lemmaname}{Lemma}
  \providecommand{\notationname}{Notation}
  \providecommand{\propositionname}{Proposition}
\providecommand{\theoremname}{Theorem}
\begin{document}

\title{On the failure of Ornstein theory in the finitary category}

\author{Uri Gabor\thanks{Supported by ISF grant 1702/17}}
\maketitle
\begin{abstract}
We show the invalidity of finitary counterparts for three classification
theorems: The preservation of being a Bernoulli shift through factors,
Sinai's factor theorem, and the weak Pinsker property. We construct
a finitary factor of an i.i.d. process which is not finitarily isomorphic
to an i.i.d. process, showing that being finitarily Bernoulli is not
preserved through finitary factors. This refutes a conjecture of M.
Smorodinsky \cite{key-8}, which was first suggested by D. Rudolph
\cite{key-7}. We further show that any ergodic system is isomorphic
to a process none of whose finitary factors are i.i.d. processes,
and in particular, there is no general finitary Sinai's factor theorem
for ergodic processes. An immediate consequence of this result is
the invalidity of a finitary weak Pinsker property, answering a question
of G. Pete and T. Austin \cite{key-1}.
\end{abstract}

\section{Introduction}

A (measure preserving) system $\mathcal{X}=(\mathbf{X},\mathscr{B},\mu,T)$
is a probability space $(\mathbf{X},\mathscr{B},\mu)$ endowed with
a measure preserving transformation $T$. Given two systems $\mathcal{X}=(\mathbf{X},\mathscr{B},\mu,T_{\mathbf{X}})$
and $\mathcal{Y}=(\mathbf{Y},\mathscr{C},\nu,T_{\mathbf{Y}})$, a
measurable function $\phi:\mathbf{X}\rightarrow\mathbf{Y}$ is said
to be a factor map, and $\mathcal{Y}$ is a factor of $\mathcal{X}$,
if $\phi$ is translation equivariant (i.e. $\phi\circ T_{\mathbf{Y}}=T_{\mathbf{X}}\circ\phi$)
and $\phi_{*}\nu=\mu$. If furthermore, $\phi$ is invertible, then
$\phi$ is said to be an isomorphism, and $\mathcal{X}$ and $\mathcal{Y}$
are said to be isomorphic.

A process is a system $\mathcal{X}$ where $\mathbf{X}=A^{\mathbb{Z}}$
for some set $A$ and $T_{\mathbf{X}}$ is the left shift operator.
In this case, we abbreviate $\mathcal{X}=(A^{\mathbb{Z}},\mu)$, and
$S$ stands for the left shift operator. Given two processes $\mathcal{X}=(A^{\mathbb{Z}},\mu)$
and $\mathcal{Y}=(B^{\mathbb{Z}},\nu)$, a factor map $\phi:A^{\mathbb{Z}}\rightarrow B^{\mathbb{Z}}$
is said to be finitary, and $\mathcal{Y}$ is a finitary factor of
$\mathcal{X}$, if for $\mu$-a.e. $x\in A^{\mathbb{Z}}$ there is
some $r=r(x)$ s.t. the projection of $\phi$ on its zero coordinate,
$\phi_{0}$, is constant up to $\mu$-measure zero on $[x_{-r}^{r}]=\left\{ x'\in A^{\mathbb{Z}}:\,\forall-r\leq i\leq r,\,x'_{i}=x_{i}\right\} $.
If in addition $\phi$ is invertible with a finitary inverse, then
$\phi$ is a finitary isomorphism and $\mathcal{X}$ and $\mathcal{Y}$
are said to be finitarily isomorphic. Systems of particular interest
in this paper are i.i.d. processes $(B^{\mathbb{Z}},p^{\times\mathbb{Z}})$,
where $p=(p_{b})_{b\in B}$ is a probability measure on $B$ and $p^{\times\mathbb{Z}}$
is the product measure, and Bernoulli systems, which are systems isomorphic
to some i.i.d. process.

Several years after D.S. Ornstein proved the Isomorphism theorem of
Bernoulli systems, saying that two Bernoulli systems of equal entropy
are isomorphic, M. Keane and M. Smorodinsky proved a finitary version
of it, showing that two i.i.d. processes of equal entropy are finitarily
isomorphic. Besides the Isomorphism theorem, one may wonder whether
other theorems in the area of measure theoretic classification theory
might have finitary versions. In this paper we deal with the invalidity
of finitary counterpart for three of these theorems: The Sinai's factor
Theorem \cite{key-3,key-6}, The Weak Pinsker property \cite{key-1},
and that factors of Bernoulli systems are Bernoulli \cite{key-2}.
Among those three theorems, the last one is probably the oldest candidate
for having a finitary counterpart: In \cite{key-7}, Rudolph asked
whether any finitary factor of an i.i.d. process is finitarily isomorphic
to an i.i.d. process. A decade later, M. Smorodinsky proved in \cite{key-8}
that any continuous factor of an i.i.d. process (and more generally,
any finitely dependent process) is finitarily isomorphic to some i.i.d.
process. This result falls short of the finitary case, since finitary
maps are just maps that are continuous on a some full measure set.
Indeed, it was conjectured there that the result is valid for finitary
factors too (see also the work of Shea \cite{key-12} and Lazowski
and Shea \cite{key-11} on this conjecture). Our first theorem refutes
this conjecture:
\begin{thm}
\label{thm:Refute Conj}There exists a finitary factor of an i.i.d.
process which is not finitarily isomorphic to an i.i.d. process.
\end{thm}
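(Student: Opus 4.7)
The plan is to construct a specific finitary factor $\phi\colon(A^{\mathbb{Z}},p^{\times\mathbb{Z}})\to(B^{\mathbb{Z}},\nu)$ and then exhibit an invariant of finitary isomorphism on which $\nu$ differs from every i.i.d.\ process. Since $\nu$ is automatically Bernoulli (as a factor of i.i.d., by Ornstein's theorem \cite{key-2}) and since the Keane--Smorodinsky theorem guarantees that any two i.i.d.\ processes of equal entropy are finitarily isomorphic, it will suffice to show that $\nu$ is not finitarily isomorphic to the unique i.i.d.\ of entropy $h(\nu)$.

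For the construction I would use a hierarchical marker-and-filler scheme. Choose a rapidly decreasing sequence of densities $\rho_n\downarrow 0$, and at each level $n$ pick a finite word $w_n$ whose probability under $p^{\times\mathbb{Z}}$ is of order $\rho_n$. Using nested occurrences of the $w_n$ as markers, define a factor map $\phi$ that, on a full-measure set, locates the origin inside a uniquely-determined block at each level and outputs at the origin a symbol computed from bounded-range data within that block. The key point is that the relevant marker at each $x$ can be identified from a window of radius $r(x)$ which is finite $p^{\times\mathbb{Z}}$-almost surely, so that $\phi$ is finitary; yet the image distribution $\nu$ inherits long-range dependence whose fine structure is dictated by the rates $(\rho_n)$.

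To rule out finitary isomorphism with an i.i.d.\ process, I would read off a quantitative obstruction from the coding radii. The goal is to establish a tail estimate that the coding radius of \emph{any} finitary map from an i.i.d.\ process onto $\nu$ (or conversely) must satisfy, phrased in terms of entropy and of the frequencies of short words, and then to tune the rates $\rho_n$ so that $\nu$ itself violates this estimate. Informally, the nested hierarchy in $\nu$ should force any attempt to recode $\nu$ by i.i.d.\ symbols to demand arbitrarily deep lookbacks with probabilities incompatible with an i.i.d.\ target.

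The main obstacle is isolating the right finitary-isomorphism invariant. Since $\nu$ is Bernoulli, no classical measure-theoretic invariant (entropy, tail $\sigma$-algebra, mixing class) can detect the failure; the invariant must be sensitive to the almost-sure local behaviour of codes rather than just to their measurable type. Showing that such an invariant is genuinely preserved under finitary isomorphism while simultaneously verifying that the engineered hierarchy in $\nu$ breaks it is the crux of the argument; indeed, the design of $\nu$ must simultaneously keep the markers frequent enough that $\phi$ is finitary and make them sparse enough that $\nu$ resists being finitarily encoded by i.i.d.\ symbols, and it is in balancing these opposing demands that I expect the essential difficulty to reside.
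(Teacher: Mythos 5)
Your high-level strategy is aligned with the paper's, but there is a genuine gap at the crux: you explicitly defer "isolating the right finitary-isomorphism invariant," and that isolation is precisely the content of the proof. The paper's key step is a concrete, quantitative obstruction (Lemma~\ref{lem:Suppose-that-An}): if an ergodic process finitarily factors onto an i.i.d.\ process of entropy $h'$, then for every $\epsilon>0$ and a.e.\ $x$, eventually in $\ell$ one has
\[
\mu\Bigl(\bigcap_{k=0}^{r-1}S^{k\ell}[x_0^{\ell-1}]\Bigr)\leq 2^{-(1-\epsilon)h'r\ell}
\quad\text{for all }r,
\]
i.e.\ periodic repetitions of a typical long word cannot be anomalously likely. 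This is not a statement about coding radii per se; finitariness enters only to ensure that a long input block almost determines a long output block, after which the repeat bound is forced by independence on the i.i.d.\ side. Your proposed "tail estimate on coding radii" points in a different, unsubstantiated direction, and without pinning down the actual invariant the proof cannot close.

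The construction you sketch is also more complicated than what is needed. The paper uses a single-level marker scheme on a uniform i.i.d.\ source (markers are the symbol $1$), and inside each sufficiently long marker-to-marker block it plants consecutive copies of an enumerated word $v_L$ in the first half while packing the lost information redundantly into the second half. This single trick makes the image both finitary (markers locate the block around the origin) and of entropy $>\log 2$ (the map together with the marker process is an isomorphism), while embedding, for \emph{every} word, a periodic repeat with probability roughly $(9/10)^L$ — which violates the Lemma for $h'=\log 2$. No nested hierarchy of marker densities $\rho_n$ is required, and it is unclear your hierarchical scheme would produce the same violation without further specification.

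Finally, your reduction via Ornstein and Keane--Smorodinsky (it suffices to rule out the single i.i.d.\ of entropy $h(\nu)$) is correct but unnecessary: since finitary isomorphism preserves entropy, it is enough to show $\nu$ admits no i.i.d.\ finitary \emph{factor} of entropy exceeding $\log 2$, which is what the Lemma plus construction delivers directly, with the side bonus of the stronger Theorem~\ref{thm:There-exists-a}.
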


What we actually prove is the following theorem, for which Theorem
\ref{thm:Refute Conj} is an immediate corollary of it.
\begin{thm}
\label{thm:There-exists-a}There exists a process of entropy greater
than $\log2$ which is a finitary factor of an i.i.d. process, but
none of whose finitary factors of entropy greater than $\log2$ is
an i.i.d. process. In particular, this process is not finitarily isomorphic
to an i.i.d. process.
\end{thm}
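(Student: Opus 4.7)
The plan is to build an explicit process $\mathcal{Y}=(A^{\mathbb{Z}},\nu)$ together with a quantitative invariant that simultaneously distinguishes $\mathcal{Y}$ from every i.i.d.\ process and is preserved under all finitary factor maps onto processes of entropy above $\log 2$. I would proceed in four steps.

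First I would construct $\mathcal{Y}$ using a stationary marker--block scheme. A sparse renewal sequence of ``marker'' symbols partitions $\mathbb{Z}$ into inter-marker gaps, and the gaps are filled i.i.d.\ from a carefully designed distribution $Q$ on finite words over the non-marker part of $A$. The marker density and $Q$ are chosen so that $h(\mathcal{Y})>\log 2$ and so that $Q$ carries a specific combinatorial rigidity: for every length $n$, a small exceptional family of $n$-words collects total $\nu$-mass at least $e^{-\alpha n}$, for some fixed $\alpha$ comfortably below $h(\mathcal{Y})-\log 2$. Second, to verify that $\mathcal{Y}$ is a finitary factor of an i.i.d.\ process, I would run a Keane--Smorodinsky-type marker argument on a slightly higher-entropy i.i.d.\ source: finitarily locate the markers of $\mathcal{Y}$ in the source, and use the remaining i.i.d.\ randomness inside each located gap to draw a $Q$-block.

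Third I would introduce the invariant $\mathcal{R}(\mathcal{X})$, defined as the supremum of $\alpha\ge 0$ such that $\mathcal{X}$ admits an exceptional family of $n$-words of total measure at least $e^{-\alpha n}$ for all large $n$. By Shannon--McMillan--Breiman, $\mathcal{R}$ vanishes on every i.i.d.\ process, while $\mathcal{R}(\mathcal{Y})>0$ by construction. The crux is the transfer step: if $\phi:\mathcal{Y}\to\mathcal{Z}$ is a finitary factor map with $h(\mathcal{Z})>\log 2$, then $\mathcal{R}(\mathcal{Z})>0$. The finitary property yields an almost surely finite coding radius $r(x)$, so each length-$n$ cylinder of $\mathcal{Z}$ pulls back to a union of length-$(n+2r)$ cylinders of $\mathcal{Y}$. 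The heavy cylinders of $\mathcal{Y}$ should push forward to a family in $\mathcal{Z}$ whose total mass is not excessively diluted; the boundary contribution at the two ends costs at most $2r\log|A|$ in exponential terms, and the hypothesis $h(\mathcal{Z})>\log 2$ is what gives the target alphabet enough room to still carry the anomaly.

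The principal obstacle is this transfer step: one must argue that a finitary code cannot dissolve the rigidity encoded by $Q$ into a uniformly distributed target, and this is where the precise shape of $Q$, the tail decay of the coding radius, and the lower entropy bound $\log 2$ all have to be calibrated together. A secondary but still substantive difficulty is ensuring that $Q$ is simultaneously rigid enough for the transfer argument to go through and regular enough that the marker-block simulation in Step 1 genuinely produces a finitary factor map from i.i.d.\ data. Tuning these competing demands -- rigidity against i.i.d.-codability -- is where most of the construction-specific work will reside, and I expect the final numerical value $\log 2$ in the statement to drop out of exactly this calibration.
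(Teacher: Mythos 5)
The central idea---find a quantitative statistic that is bounded for i.i.d.\ processes, transfers under finitary factor maps, and fails for a tailor-made finitary factor of i.i.d.---is exactly the paper's strategy, but your proposed invariant $\mathcal{R}$ does not work. Defined as the supremum of $\alpha$ for which a small family of $n$-words carries total mass $\ge e^{-\alpha n}$, $\mathcal{R}$ does \emph{not} vanish on i.i.d.\ processes: for any i.i.d.\ law with an atom $p_b$, the single cylinder $[b^n]$ already has mass $p_b^n=e^{-n(-\log p_b)}$, so $\mathcal{R}\ge -\log p_b>0$. Shannon--McMillan--Breiman controls where mass concentrates, not whether an individual heavy word exists, so there is no dichotomy to exploit. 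The paper's working invariant is more specific: it is the exponential decay rate of \emph{repetition cylinders} $\mu\bigl(\bigcap_{k=0}^{r-1}S^{k\ell}[x_0^{\ell-1}]\bigr)$. For an i.i.d.\ process this factorizes as $\mu([x_0^{\ell-1}])^r$, giving an upper bound $2^{-(1-\epsilon)h'r\ell}$, and the bound pulls back through a finitary factor map because a point whose first $r\ell$ coordinates are $\ell$-periodic is sent (away from a small boundary) to a point whose image coordinates are also $\ell$-periodic. The repetition structure is the extra ingredient that makes the transfer genuinely constraining; anomalous single cylinders alone are not.

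There are two further soft spots. First, the transfer step as you outline it treats the coding radius $r(x)$ as a fixed cost $2r\log|A|$, but $r$ is an unbounded random variable; the paper handles this by fixing a cutoff $M$ with $\mu(R_0\le M)$ close to one and invoking the ergodic theorem to control the density of badly coded positions inside $[0,\ell)$, which is what makes the $\epsilon$-loss uniform in $\ell$ and $r$. Second, your construction (markers with i.i.d.\ $Q$-blocks, realized from i.i.d.\ data via a Keane--Smorodinsky simulation) adds a layer of work the paper avoids: the paper starts from a genuine i.i.d.\ process on $\{1,\dots,10\}^{\mathbb{Z}}$, delimits blocks by the symbol~$1$, and within each long block plants many consecutive copies of a word $v_L$ from a fixed enumeration of all finite words. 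This block recoding is manifestly finitary, and it forces, for each word $x_0^{\ell-1}$, a repetition cylinder of length $\approx L/(2\ell)$ to occur with probability $\approx (9/10)^{L}$, which is of order $2^{-cL}$ with $c<\log 2$; this directly violates the i.i.d.\ upper bound for $h'=\log 2$. So the architecture of your proposal is on the right track, but the invariant needs to be replaced by the repetition-cylinder decay rate, and the construction and the handling of the random coding radius both need the concrete details the paper supplies.
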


We further investigate how common it is for a process to finitarily
factor onto an i.i.d. process. Sinai's factor theorem \cite{key-3,key-6}
asserts that any ergodic system factors onto any Bernoulli system
of lesser or equal entropy. A finitary version of this theorem would
ask for a finitary factor map from any ergodic process onto any i.i.d.
process of lesser or equal entropy. As the following theorem shows,
any ergodic system of finite entropy has an isomorphic process for
which this finitary version is invalid:
\begin{thm}
\label{thm:Finitary Sinai}Any ergodic system of positive entropy
is isomorphic to a process none of whose finitary factors is an i.i.d.
process.
\end{thm}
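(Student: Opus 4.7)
The strategy is to bootstrap Theorem \ref{thm:There-exists-a} in two directions: from one specific source process to an arbitrary ergodic system of positive entropy, and from blocking finitary i.i.d.\ factors of entropy above $\log 2$ to blocking them at every positive scale. Fix an ergodic system $\mathcal{X}$ of positive entropy $h$. Using Krieger's finite generator theorem when $h<\infty$ (or its countable-alphabet version when $h=\infty$), realize $\mathcal{X}$ as a process $\mathcal{Y}_0=(A^{\mathbb{Z}},\mu)$. The task reduces to producing a shift-equivariant $\mu$-a.e.\ bijection $\Psi\colon A^{\mathbb{Z}}\to B^{\mathbb{Z}}$ such that the resulting process $\mathcal{Y}=(B^{\mathbb{Z}},\Psi_{*}\mu)$ admits no non-trivial i.i.d.\ process as a finitary factor.

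The first step is to extract from the proof of Theorem \ref{thm:There-exists-a} a parametric recoding scheme $R_{\eta}$ which, when applied to any ergodic source of entropy greater than $\eta$, yields an isomorphic recoding whose finitary factors of entropy $>\eta$ are not i.i.d.\ processes. The i.i.d.\ hypothesis on the source in Theorem \ref{thm:There-exists-a} is needed only for the side assertion that the constructed process is itself a finitary factor of an i.i.d., which plays no role here; the blocking property itself should depend only on positive entropy of the source, and the threshold $\eta$ should be adjustable to any positive number by rescaling the block lengths and marker densities in the construction.

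With such $R_{\eta}$ in hand, I would pick a sequence $\eta_{n}\downarrow 0$ and apply $R_{\eta_{n}}$ iteratively, producing a tower $\mathcal{Y}_{0}\to\mathcal{Y}_{1}\to\mathcal{Y}_{2}\to\cdots$ of isomorphic process representations of $\mathcal{X}$, with $\mathcal{Y}_{n}$ blocking i.i.d.\ finitary factors of entropy $>\eta_{n}$. By designing the recoding at stage $n$ to leave the marker/block structure of earlier stages intact --- for instance, by reserving disjoint alphabet labels for each stage's markers, or by choosing each stage's block length to far exceed all previous ones --- the blocking property established at stage $k$ is preserved in every subsequent stage. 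The inverse limit of this tower, represented as a process on a suitable (possibly countable) alphabet, will be the desired $\mathcal{Y}$.

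The main obstacle is controlling this iterative construction: one must ensure that successive recodings neither destroy the non-i.i.d.-factor property established at earlier stages nor fail to converge to a legitimate process isomorphism onto $\mathcal{X}$. If the tower approach proves too delicate, a safer alternative is a single multi-scale encoding built from the start with a hierarchy of markers at all scales $\eta_{n}$, which eliminates the limiting step at the price of a more intricate initial alphabet and a more careful verification that $\Psi$ is a.e.\ bijective.
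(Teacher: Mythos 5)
Your plan has two genuine gaps, and they sit exactly where the real difficulty of the theorem lies. First, the extraction of a recoding $R_{\eta}$ from Theorem \ref{thm:There-exists-a} that applies to an \emph{arbitrary} ergodic source and is an \emph{isomorphism} is not routine. The map $\phi$ in that proof is only a factor map; invertibility is recovered there by adjoining the marker process $\psi$, and once you adjoin it the word-planting argument collapses: a.e. word $x_{0}^{\ell-1}$ of the isomorphic copy now contains marker coordinates, hence is not one of the planted words $v_{L}$, so the anomalously likely repetitions needed to violate the conclusion of Lemma \ref{lem:Suppose-that-An} are no longer available for typical words; and planting \emph{all} words over the enlarged alphabet would force you to plant fake marker patterns, which conflicts with the decodability you need for a.e. bijectivity. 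Moreover, for a general ergodic source there is no i.i.d. marker symbol, so the exponential lower bound of order $2^{-\delta L}$ on the probability of long blocks (with $\delta$ small relative to $\eta$) must itself be engineered by a tower construction --- this is the heart of the matter and is only asserted. Second, the iteration/limit step does not work as described: the property ``no finitary i.i.d. factor of entropy $>\eta_{k}$'' is a property of the \emph{presentation}, not of the isomorphism class (that is the whole point of the theorem), and it is not preserved by further non-finitary recodings, nor does it pass to an inverse-limit presentation, since a finitary factor of the limit process may read coordinates contributed by all stages simultaneously; ``leaving earlier markers intact'' does not by itself give any estimate for such a factor.

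For comparison, the paper avoids both problems by a Baire category argument rather than an explicit construction. It works in the compact space $\hat{J}$ of (limits of ergodic) joinings of $\mu$ with higher-entropy measures, invokes a residual form of Krieger's finite generator theorem (the joinings supported on graphs of isomorphisms are a dense $G_{\delta}$ in $\hat{J}$), and shows that for every word $w$ the set $\mathcal{O}(w)$ of joinings giving probability greater than $\frac{1}{M}$ to $w$ repeated $\lceil\log(2M)\rceil$ times, for some $M$, is open and dense --- density via a small perturbation supported on a Rokhlin tower, controlled in $\bar{d}$. Any joining in the residual set $\mathcal{I}\cap\bigcap_{w}\mathcal{O}(w)$ has second marginal isomorphic to $\mu$, and it admits no finitary i.i.d. factor: a finitary factor map would yield a word $w$ forcing an output symbol of probability at most $\frac{1}{2}$, whence $\nu([w^{\lceil\log(2M)\rceil}])\leq\frac{1}{2M}$ for all $M$, contradicting membership in $\mathcal{O}(w)$. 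Note that this needs only a one-shot, logarithmic-length repetition bound per word (no exponential multi-scale estimates, no hierarchy of thresholds $\eta_{n}$), and the isomorphism comes for free from residuality instead of explicit decoding. If you wish to salvage your explicit multi-scale scheme, you would essentially have to reproduce statements of this strength by hand, including a proof that the final single presentation defeats finitary i.i.d. factors of \emph{every} positive entropy.
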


Closely related to Sinai's factor theorem is the weak Pinsker property
- a long standing conjecture proved recently by T. Austin \cite{key-1},
which asserts that for any ergodic system and a Bernoulli system of
strictly smaller entropy, the former system splits as a product of
the latter with another system. Here, a finitary version of it would
ask whether every ergodic process is finitarily isomorphic to the
product of an i.i.d. process with a suitable low-entropy process.
The question whether this finitary weak Pinsker property takes place
in general was suggested by G. Pete and T. Austin \cite{key-1}. A
negative answer to their question is an immediate corollary of Theorem
\ref{thm:Finitary Sinai}: For if we would assume there is for any
process $\mathbf{X}$ a finitary isomorphism $\phi:\mathbf{X}\rightarrow\mathbf{B}\times\mathbf{Y}$
with $\mathbf{B}$ being an i.i.d. process, then composing the projection
$\pi_{1}:\mathbf{B}\times\mathbf{Y}\rightarrow\mathbf{B}$ together
with $\phi$, gives a finitary factor map $\pi_{1}\circ\phi:\mathbf{X}\rightarrow\mathbf{B}$,
contradicting Theorem \ref{thm:Finitary Sinai}.
\begin{notation*}
For $x\in A^{\mathbb{Z}}$ and integers $m<n$, write
\begin{align*}
x_{m}^{n} & =(x_{m},...,x_{n})\\
\left[x_{m}^{n}\right] & =\left\{ y\in A^{\mathbb{Z}}:\,y_{m}^{n}=x_{m}^{n}\right\} 
\end{align*}
Given a finitary map $\phi:\left(A^{\mathbb{Z}},\mu\right)\rightarrow\left(B^{\mathbb{Z}},\nu\right)$,
denote by $\pi_{0}:B^{\mathbb{Z}}\rightarrow B$ the projection on
the zero coordinate, and let
\[
\phi_{0}:=\phi\circ\pi
\]
\[
R_{n}(x):=\min\left\{ r\geq0:\phi_{0}\text{ is \ensuremath{\mu}-a.e. constant on }\left[\left(S^{n}x\right)_{-r}^{r}\right]\right\} 
\]
and 
\[
\left[\phi\left(x_{m}^{n}\right)\right]:=\left\{ y\in B^{\mathbb{Z}}:\begin{gathered}\forall i\:\text{s.t.}\,[i-R_{i}(x),i+R_{i}(x)]\subset[m,n],\\
y_{i}=\phi(x)_{i}
\end{gathered}
\right\} .
\]
Note that any point in $\left[x_{m}^{n}\right]$ is mapped by $\phi$
into $\left[\phi\left(x_{m}^{n}\right)\right]$.
\end{notation*}

\section{Proof of Theorem \ref{thm:There-exists-a}}

We begin with the following observation:
\begin{lem}
\label{lem:Suppose-that-An}If an ergodic process $(A^{\mathbb{Z}},\mu)$
finitarily factors onto an i.i.d. process $(B^{\mathbb{Z}},p^{\times\mathbb{Z}})$
with entropy $h'$, then the following property holds:

For any $\epsilon>0$, for $\mu$-a.e. $x\in A^{\mathbb{Z}}$, there
exists $L(x)\in\mathbb{N}$ s.t. for any $\ell\geq L(x)$ and any
$r\in\mathbb{N}$, one has
\begin{equation}
\mu\left(\bigcap_{k=0}^{r-1}S^{k\ell}\left[x_{0}^{\ell-1}\right]\right)\leq2^{-(1-\epsilon)h'r\ell}\label{eq:extention inequality}
\end{equation}
\end{lem}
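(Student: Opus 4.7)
The plan is to exploit the finitary structure of the given factor map $\phi\colon(A^{\mathbb{Z}},\mu)\to(B^{\mathbb{Z}},p^{\times\mathbb{Z}})$ together with the i.i.d.\ structure of the target. By $S$-invariance of $\mu$, the set $E:=\bigcap_{k=0}^{r-1}S^{k\ell}[x_{0}^{\ell-1}]$ has the same $\mu$-measure as the cylinder $[w]:=\{y\in A^{\mathbb{Z}}:y_{0}^{r\ell-1}=w\}$, where $w=x_{0}^{\ell-1}x_{0}^{\ell-1}\cdots x_{0}^{\ell-1}$ is the $r$-fold concatenation. Thus it suffices to upper bound $\mu([w])$.

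The crux of the argument --- and what I expect to be its only non-routine step --- is the following structural observation. For any $y\in[w]$, any $k\in\{0,\ldots,r-1\}$, and any $j\in[R,\ell-1-R]$, the word $y_{k\ell+j-R}^{k\ell+j+R}$ coincides with $x_{j-R}^{j+R}$. Since both the event $\{R_{0}(\cdot)\leq R\}$ and the $\mu$-a.e.\ constant value of $\phi_{0}$ on this event are determined by the $(2R+1)$-block around the origin, one obtains the equivalence $R_{k\ell+j}(y)\leq R\iff R_{j}(x)\leq R$, with $\phi(y)_{k\ell+j}=\phi(x)_{j}$ whenever this holds. Setting $J:=\{j\in[R,\ell-1-R]:R_{j}(x)\leq R\}$, it follows that $\phi([w])\subseteq F$ up to a $\mu$-null set, where
\[
F:=\{z\in B^{\mathbb{Z}}:z_{k\ell+j}=\phi(x)_{j}\text{ for all }k\in\{0,\ldots,r-1\}\text{ and }j\in J\}.
\]
Since $\phi_{*}\mu=p^{\times\mathbb{Z}}$ is i.i.d., $\mu([w])\leq p^{\times\mathbb{Z}}(F)=\prod_{j\in J}p(\phi(x)_{j})^{r}$, so that $-\log\mu([w])\geq r\sum_{j\in J}(-\log p(\phi(x)_{j}))$.

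The remaining task is to show $\sum_{j\in J}(-\log p(\phi(x)_{j}))\geq(1-\epsilon)h'\ell$ for $\mu$-a.e.\ $x$ and all $\ell\geq L(x)$. Let $f(y):=-\log p(\phi(y)_{0})\cdot\mathbf{1}_{\{R_{0}(y)\leq R\}}$. Then $f\in L^{1}(\mu)$ and $\mathbb{E}_{\mu}[f]\uparrow h'$ as $R\to\infty$ by monotone convergence, using that $R_{0}<\infty$ $\mu$-a.s.\ (since $\phi$ is finitary) and that $\mathbb{E}_{\mu}[-\log p(\phi(y)_{0})]=h'$ (the entropy formula for the i.i.d.\ factor). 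Fix $R$ so that $\mathbb{E}_{\mu}[f]\geq(1-\epsilon/3)h'$; note $R$ depends only on $\epsilon$. By Birkhoff's ergodic theorem applied to $(A^{\mathbb{Z}},\mu,S)$, for $\mu$-a.e.\ $x$ there is $L(x)\in\mathbb{N}$ with $\sum_{j=0}^{\ell-1}f(S^{j}x)\geq(1-\epsilon/2)h'\ell$ for every $\ell\geq L(x)$. The boundary contribution $\sum_{j\in[0,R)\cup[\ell-R,\ell)}f(S^{j}x)$ is $o(\ell)$ by the Ces\`aro consequence $f(S^{\ell-k}x)/\ell\to0$ for each fixed $k$. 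Combining these yields $\sum_{j\in J}(-\log p(\phi(x)_{j}))=\sum_{j=R}^{\ell-1-R}f(S^{j}x)\geq(1-\epsilon)h'\ell$ for $\ell\geq L(x)$, and substituting into the previous bound gives $\mu([w])\leq 2^{-(1-\epsilon)h'r\ell}$ uniformly in $r$ (as $R$ depends only on $\epsilon$ and $L(x)$ is independent of $r$), which is the desired inequality.
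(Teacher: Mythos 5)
Your proof is correct and follows essentially the same strategy as the paper: use the finitary structure to find a radius $R$ within which the factor is locally coded on a large-measure event, invoke Birkhoff on the resulting stationary observable to get an a.e. lower bound on $-\log$ of the cylinder probability, and then exploit the i.i.d. structure of the target to multiply across the $r$ disjoint blocks. The one stylistic improvement is that you fold the two separate Birkhoff estimates the paper uses (one for the empirical frequency of the locally-coded set $X_M$, another for the empirical symbol distribution of $\phi(x)$) into a single ergodic average of the combined function $f(y)=-\log p(\phi(y)_0)\,\mathbf{1}\{R_0(y)\le R\}$, which streamlines the bookkeeping and incidentally dispenses with the paper's preliminary reduction to a finite target alphabet $B$; but the underlying mechanism is identical.
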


\textbf{Proof}: Assume $\phi:A^{\mathbb{Z}}\rightarrow B^{\mathbb{Z}}$
is a finitary factor map, with $h(p^{\times\mathbb{Z}})=h'\leq h(\mu)$.
Fix $\epsilon>0$. We assume $B$ is finite (otherwise, with arbitrary
small loss of entropy, one can replace $(B^{\mathbb{Z}},p^{\times\mathbb{Z}})$
by a finitary factor of it with finite alphabet). Let $\bar{\epsilon}=\frac{\epsilon h'}{-\log(\min_{b\in B}p_{b})}$.
$\phi$ being finitary, guarantees an $M$ s.t. the set 
\[
X_{M}=\left\{ x\in A^{\mathbb{Z}}:\,R_{0}(x)\leq M\right\} 
\]
 of points whose zero coordinate is encoded at time $M$ is of measure
$\mu(X_{M})>(1-0.1\bar{\epsilon})$. Let $X'\subset A^{\mathbb{Z}}$
be the set of points $x$ that satisfy
\[
\lim_{N\rightarrow\infty}\frac{1}{N}\sum_{n=0}^{N-1}\mathbf{1}\{S^{n}x\in X_{M}\}=\mu(X_{M})
\]
and for each $b\in B$,
\[
\lim_{N\rightarrow\infty}\frac{1}{N}\sum_{n=0}^{N-1}\mathbf{1}\{\phi(x)_{n}=b\}=p_{b}
\]
By the ergodic theorem, this set is of measure 1. For any $x\in X'$,
taking $L(x)$ large enough guarantees that for any $\ell\geq L(x)$,

1. Reading $x_{0}^{\ell-1}$ determines at least $1-0.2\bar{\epsilon}$
of the sample $\phi(x)_{M}^{\ell-M}$.

2. $\frac{2M}{\ell}<0.1\epsilon$ .

3. $\prod_{k=M}^{\ell-M}p_{\phi(x)_{k}}\leq2^{-(1-0.1\epsilon)(\ell-2M)h'}$.

Thus, for any $x\in X'$, $r\in\mathbb{N}$ and $\ell\geq L(x)$,
\begin{align*}
p^{\times\mathbb{Z}}\left(\left[\phi\left(x_{0}^{\ell-1}\right)\right]\right) & \leq2^{-(1-0.1\epsilon)(\ell-2M)h'}\left(\frac{1}{\min_{b\in B}p_{b}}\right)^{0.2\bar{\epsilon}\ell}\\
 & \leq2^{-(1-0.1\epsilon)(1-0.1\epsilon)\ell h'+0.2\epsilon\ell h'}\\
 & \leq2^{-(1-\epsilon)\ell h'}
\end{align*}
and since any point in $\bigcap_{k=0}^{r-1}S^{k\ell}\left[x_{0}^{\ell-1}\right]$
is mapped by $\phi$ into $\bigcap_{k=0}^{r-1}S^{k\ell}\left[\phi\left(x_{0}^{\ell-1}\right)\right]$,
we have
\begin{align*}
\mu\left(\bigcap_{k=0}^{r-1}S^{k\ell}\left[x_{0}^{\ell_{i}-1}\right]\right) & \leq p^{\times\mathbb{Z}}\left(\bigcap_{k=0}^{r-1}S^{k\ell}\left[\phi\left(x_{0}^{\ell-1}\right)\right]\right)\\
 & =p^{\times\mathbb{Z}}\left(\left[\phi\left((x_{0}^{\ell-1}\right)\right]\right)^{r}\\
 & \leq2^{-(1-\epsilon)h'r\ell}
\end{align*}
and the conclusion follows.$\hfill\square$

To prove Theorem \ref{thm:There-exists-a}, we will construct a process
$(A^{\mathbb{Z}},\mu)$ that is a finitary factor of an i.i.d. process,
which for $h'=\log2<h(\mu)$ doesn't satisfy (\ref{eq:extention inequality}).

\textbf{Proof of Theorem \ref{thm:There-exists-a}}: Take $A=\{1,...,10\}$,
and $\mu=(\frac{1}{10},...,\frac{1}{10})^{\mathbb{Z}}$. Let $B=A\times A$,
and enumerate all the words $v\in B^{*}=\cup_{n=1}^{\infty}B^{n}$
as $(v_{L})_{L=10}^{\infty}$, in such a way that for any $L$, 
\begin{equation}
L\geq10|v_{L}|\label{eq:fin.Ber.Copied-word-size}
\end{equation}

We now define a map $\phi:A^{\mathbb{Z}}\rightarrow B^{\mathbb{Z}}$
as follows: For $x\in A^{\mathbb{Z}}$, write $x$ as a concatenation
of Blocks
\[
x=...W_{-1}W_{0}W_{1}...
\]
where each block consists of a ``1'' symbol at the first place,
followed by symbols different from ``1''. Each Block $W_{n}=a_{1}...a_{L}$
is replaced by a $B$-block $V_{n}=b_{1}...b_{L}$ of the same length
(which we denote by $L=|W_{n}|=|V_{n}|$), defined by the following
rule: If $L<10$, then for all $1\leq i\leq L$, let 
\[
b_{i}=(a_{i},a_{i})
\]
 Otherwise, if $L\geq10$, the first $\lfloor L/2\rfloor$ symbols
of $V_{n}$ are filled with consecutive copies of the word $v_{L}$
(the last copy might be cut before its end), and for any $\lfloor L/2\rfloor<i\leq L$,
we let
\[
b_{i}=(a_{i},a_{i-\lfloor L/2\rfloor}).
\]
Finally, define $\phi(x)=y:=...V_{-1}V_{0}V_{1}...$, and $\nu:=\phi_{*}\mu$. 

Evidently, $\phi:(A^{\mathbb{Z}},\mu)\rightarrow(B^{\mathbb{Z}},\nu)$
is a finitary factor map. To estimate its entropy, notice that if
$\psi:A^{\mathbb{Z}}\rightarrow\{0,1\}^{\mathbb{Z}}$ is the projection
on the ``1''-symbol process, defined by 
\[
\psi(x)=\left(\mathbf{1}_{x_{n}=1}\right)_{n\in\mathbb{Z}}
\]
then $\phi\varotimes\psi:(A^{\mathbb{Z}},\mu)\rightarrow(B^{\mathbb{Z}}\times\{0,1\}^{\mathbb{Z}},\left(\phi\varotimes\psi\right)_{*}\mu)$
is an isomorphism, and the marginal measures of $\left(\phi\varotimes\psi\right)_{*}\mu$
are $\phi_{*}\mu=\nu$ and $\psi_{*}\mu$. Thus 
\[
\log10=h\left(\left(\phi\varotimes\psi\right)_{*}\mu\right)\leq h\left(\nu\right)+h\left(\psi_{*}\mu\right)=h\left(\nu\right)+H\left(1/10\right)
\]
where $H\left(1/10\right)=\frac{1}{10}\log10+\frac{9}{10}\log\frac{10}{9}$,
and so,
\[
h(v)\geq\frac{9}{10}\log9>\log2
\]

We now show that $\left(B^{\mathbb{Z}},\nu\right)$ cannot be a finitary
extension of any i.i.d. process of entropy greater or equal $\log2$:
For any $x\in B^{\mathbb{Z}}$ and any $\ell\in\mathbb{N}$, for some
$L\geq10\ell$, $v_{L}=x_{0}^{\ell-1}$. By the construction, we have
that
\[
\nu\left(\bigcap_{k=0}^{\lfloor\frac{L}{2\ell}\rfloor-1}S^{k\ell}[x_{0}^{\ell-1}]\right)\geq\mu\left(\psi(x)_{0}^{L+1}=1\underbrace{0\cdots0}_{{\scriptscriptstyle L-1\,\text{times}}}1\right)=\left(\frac{1}{10}\right)^{2}\left(\frac{9}{10}\right)^{L-1}
\]
Taking $\ell$ to be large, and thus $L$ being large too, 
\begin{align*}
\nu\left(\bigcap_{k=0}^{\lfloor\frac{L}{2\ell}\rfloor}S^{k\ell}[x_{0}^{\ell-1}]\right) & \geq\left(\frac{1}{10}\right)^{2}\left(\frac{9}{10}\right)^{L-1}\\
 & \geq2^{-L/3}\\
 & \geq2^{-\frac{5}{6}\lfloor\frac{L}{2\ell}\rfloor\ell}
\end{align*}
but picking $\epsilon<\frac{1}{6}$ in Lemma \ref{lem:Suppose-that-An}
implies (for large enough $\ell$) 
\[
\nu\left(\bigcap_{k=0}^{\lfloor\frac{L}{2\ell}\rfloor-1}S^{k\ell}[x_{0}^{\ell-1}]\right)<2^{-\frac{5}{6}\lfloor\frac{L}{2\ell}\rfloor\ell}
\]
which is a contradiction.$\hfill\square$

We should remark that the process constructed in the proof of Theorem
\textbf{\ref{thm:There-exists-a}} does finitarily factor onto some
i.i.d. process (of small enough entropy). If one could find an example
of a finitary factor of i.i.d. process that doesn't satisfy the conclusion
of Lemma \ref{lem:Suppose-that-An} for any $h'$, that would imply
the absence of any finitary factor of it which is an i.i.d. process.
However, such an example doesn't exist: 
\begin{prop}
\label{prop:If--is}If $(A^{\mathbb{Z}},\mu)$ is a finitary factor
of an i.i.d. process, then it satisfies the conclusion of Lemma \ref{lem:Suppose-that-An}
for some $h'>0$.
\end{prop}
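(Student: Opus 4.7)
The plan is to replace the finitary factor map $\phi:(C^{\mathbb{Z}},q^{\times\mathbb{Z}})\to(A^{\mathbb{Z}},\mu)$ by a sliding-block-code truncation of bounded radius $M$, and then exploit the resulting $2M$-dependence of the pushforward to decompose the periodic cylinder event into mutually independent sub-events along an arithmetic progression.

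Fix a small $\delta>0$ and pick $M$ with $q^{\times\mathbb{Z}}(R_0>M)<\delta$. Define $\tilde\phi:C^{\mathbb{Z}}\to(A\cup\{\ast\})^{\mathbb{Z}}$ by $\tilde\phi(c)_j=\phi(c)_j$ when $R_j(c)\le M$ and $\tilde\phi(c)_j=\ast$ otherwise. Then $\tilde\phi$ is a sliding block code of radius $M$, so $\tilde\mu:=\tilde\phi_*q^{\times\mathbb{Z}}$ has the property that any finite collection of coordinates with pairwise distances exceeding $2M$ is mutually independent. Since $\tilde\phi(c)_j\in\{\phi(c)_j,\ast\}$ for all $j$, one has the inclusion $\phi^{-1}([w])\subset\tilde\phi^{-1}(E_w)$, hence
\[
\mu([w])\le\tilde\mu(E_w),\qquad E_w:=\{y\in(A\cup\{\ast\})^{\mathbb{Z}}:y_j\in\{w_j,\ast\}\ \forall j\in[0,r\ell-1]\}.
\]

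For $w=(x_0^{\ell-1})^r$, partition $[0,r\ell-1]$ into the $d:=2M+1$ arithmetic progressions $J_i=\{j\in[0,r\ell-1]:j\equiv i\pmod d\}$. Any two elements of $J_i$ are at distance at least $d>2M$, so $\{y_j\}_{j\in J_i}$ are mutually independent under $\tilde\mu$. Writing $p_a:=q^{\times\mathbb{Z}}(R_0\le M,\,\phi(c)_0=a)$, the single-coordinate marginal is $\tilde\mu(y_j\in\{w_j,\ast\})=\delta+p_{w_j}$, so $\tilde\mu(E_w)\le\prod_{j\in J_i}(\delta+p_{w_j})$ for each $i$. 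Multiplying over $i=0,\ldots,d-1$ and using that each residue class $\bmod\,\ell$ is hit exactly $r$ times by $w$,
\[
\tilde\mu(E_w)^{d}\le\prod_{j=0}^{r\ell-1}(\delta+p_{w_j})=\left(\prod_{i=0}^{\ell-1}(\delta+p_{x_i})\right)^{r}.
\]
Since $\mu$ is ergodic (as a factor of mixing $q^{\times\mathbb{Z}}$), the ergodic theorem applied to the bounded function $a\mapsto\log(\delta+p_a)$ yields $\frac1\ell\sum_{i=0}^{\ell-1}\log(\delta+p_{x_i})\to E:=\sum_a\mu(a)\log(\delta+p_a)$ for $\mu$-a.e.\ $x$. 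Setting $\eta:=-E$ and $h':=\eta/d$, the above gives, for any $\epsilon>0$, $\mu$-a.e.\ $x$, and $\ell\ge L(x,\epsilon)$, that $\mu([w])\le 2^{-(1-\epsilon)h'r\ell}$, which is the conclusion of Lemma \ref{lem:Suppose-that-An}.

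The one nontrivial step is showing $h'>0$, i.e., $E<0$. Jensen's inequality gives $e^{E}\le\delta+\sum_a\mu(a)p_a\le\delta+(1-\delta)\max_a\mu(a)$, which is strictly less than $1$ as soon as $\max_a\mu(a)<1$. This non-degeneracy of the marginal of $\mu$ is forced by the scope of the proposition: the conclusion of Lemma \ref{lem:Suppose-that-An} with $h'>0$, applied with $r=1$ and combined with Shannon--McMillan--Breiman, already implies $h(\mu)>0$, which rules out $\mu$ being a point mass on a constant sequence; consequently $\max_a\mu(a)<1$ holds automatically in the interesting case.
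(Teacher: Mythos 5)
Your proof is correct, but it takes a genuinely different route from the paper's. The paper fixes a single non-trivial symbol $a\in A$ with $0<\mu(x_0=a)<1$, takes one source-word $w$ forcing $\phi_0\equiv a$, and observes that at every position in $x_0^{\ell-1}$ (sampled along $|w|\mathbb Z$) where the symbol is \emph{not} $a$, the preimage must avoid the shifted cylinder $\{y:y_{-k}^k=w\}$; it then uses total ergodicity to count such positions and independence of these disjoint cylinders in the i.i.d.\ source to get $h':=-\log\bigl(1-p^{\times\mathbb Z}(y_{-k}^k=w)\bigr)\frac{1-\mu(x_0=a)}{2|w|}$ directly, with no $\epsilon$. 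Your argument instead truncates $\phi$ to a radius-$M$ sliding block code $\tilde\phi$, exploits the resulting $2M$-dependence of $\tilde\mu$ by splitting $[0,r\ell-1]$ into $2M+1$ arithmetic progressions (mutual independence within each via disjoint source windows), and extracts the exponential rate from the ordinary ergodic theorem for $a\mapsto\log(\delta+p_a)$. Your approach is more uniform (it uses the whole symbol distribution rather than a single distinguished symbol), needs only ordinary ergodicity rather than total ergodicity, and gives a cleaner conceptual reason for the decay; the paper's is slightly more elementary and yields the bound with no limiting step.

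One small slip: from $q^{\times\mathbb Z}(R_0>M)<\delta$ you get $\sum_a p_a>1-\delta$ (not $\le 1-\delta$), so the step $\sum_a\mu(a)p_a\le(1-\delta)\max_a\mu(a)$ is unjustified; the correct bound is simply $\sum_a\mu(a)p_a\le\max_a\mu(a)$, giving $e^E\le\delta+\max_a\mu(a)$, which is still $<1$ once $\delta<1-\max_a\mu(a)$, so the conclusion $E<0$ survives. Also note that the implicit nondegeneracy assumption $\max_a\mu(a)<1$ is present in the paper's proof too (it needs some $a$ with $0<\mu(x_0=a)<1$), so your scope remark matches the paper's intent.
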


Thus, it is possible that any finitary factor of i.i.d. process does
finitarily factor onto some i.i.d. process, but we have not settled
this question. 

\textbf{Proof of Proposition }\ref{prop:If--is}: Let $\phi:(B^{\mathbb{Z}},p^{\times\mathbb{Z}})\rightarrow(A^{\mathbb{Z}},\mu)$
be the finitary factor map, fix $a\in A$ with $0<\mu(\{x\in A^{\mathbb{Z}}:\,x_{0}=a\})<1$,
and let $w\in B^{*}$ be a name of length $|w|=2k+1$ such that $\phi_{0}\equiv a$
$p^{\times\mathbb{Z}}$-a.s. on the set $\{y\in B^{\mathbb{Z}}:\,y_{-k}^{k}=w\}$.
Define 
\[
h':=-\log\left(1-p^{\times\mathbb{Z}}\left(y_{-k}^{k}=w\right)\right)\frac{1-\mu(x_{0}=a)}{2|w|}
\]
 Being a factor of Bernoulli system, $\mu$ is totally ergodic. Thus,
for a.e. $x$, taking $L(x)$ large enough guarantees that for any
$\ell\geq L(x)$, the set 
\[
D(x,\ell):=\left\{ 0\leq j<\lfloor\frac{\ell}{|w|}\rfloor:\,(x)_{|w|j}\neq a\right\} 
\]
satisfies
\[
\#D(x,\ell)\geq\frac{1}{2}\cdot\frac{\ell}{|w|}(1-\mu(x_{0}=a))
\]
 I addition, we have 
\[
\phi^{-1}\left(\left[x_{0}^{\ell-1}\right]\right)\subset\bigcap_{j\in D(x,\ell)}S^{|w|j}\left\{ y\in B^{\mathbb{Z}}:\,y_{-k}^{k}\neq w\right\} 
\]
and thus for any $r\in\mathbb{N}$,
\begin{align*}
\mu\left(\bigcap_{k=0}^{r-1}S^{k\ell}\left[x_{0}^{\ell-1}\right]\right) & =p^{\times\mathbb{Z}}\left(\phi^{-1}\left(\bigcap_{k=0}^{r-1}S^{k\ell}\left[x_{0}^{\ell-1}\right]\right)\right)\\
 & =p^{\times\mathbb{Z}}\left(\bigcap_{k=0}^{r-1}S^{k\ell}\phi^{-1}\left(\left[x_{0}^{\ell-1}\right]\right)\right)\\
 & \leq p^{\times\mathbb{Z}}\left(\bigcap_{k=0}^{r-1}\bigcap_{j\in D(x,\ell)}S^{k\ell+|w|j}\left\{ y\in B^{\mathbb{Z}}:\,y_{-k}^{k}\neq w\right\} \right)\\
 & \leq\left(1-p^{\times\mathbb{Z}}\left(y_{-k}^{k}=w\right)\right)^{\frac{1-\mu(x_{0}=a)}{2|w|}r\ell}\\
 & =2^{-h'r\ell}
\end{align*}
$\hfill\square$

\section{Proof of Theorem \ref{thm:Finitary Sinai}}

Let $\mathcal{X}$ be an ergodic system of finite entropy. Since any
such system is isomorphic to a process, we can assume that $\mathcal{X}=(A^{\mathbb{Z}},\mu)$
is a process. In order to prove the theorem, we use a residual version
of Krieger's finite generator theorem, following \cite[Section 7.5]{key-13}.
For the clarity of presentation, we state the theorem not in its full
extent. We now give the definitions and the statement of that theorem.

Let $\mathcal{X}=\left(A^{\mathbb{Z}},\mu\right)$ be an ergodic process
of entropy $h<\infty$, and $J$ be the compact space of all joinings
$\lambda$ of $\mu$ with any shift-invariant measure on $A^{\mathbb{Z}}$,
endowed with the weak{*} topology metric
\[
d(\lambda,\lambda')=\sum_{n=1}^{\infty}2^{-n}\left(\frac{1}{2}\sum_{v\in\left(A\times A\right)^{n}}|\lambda([v])-\lambda'([v])|\right)
\]
 Let $\tilde{J}\subset J$ be the set of all $\lambda\in J$ which
are ergodic, and such that their second marginal has entropy strictly
greater than that of $\mu$. Finally, let $\hat{J}$ be the weak{*}
closure of $\tilde{J}$ in $J$. With the induced metric, $\hat{J}$
is a compact metric space.

For a measurable map $\phi:A^{\mathbb{Z}}\rightarrow A^{\mathbb{Z}}$,
we say that a joining $\lambda\in J$ is supported on the graph of
$\phi$ if $\lambda\left((x,\phi(x)):\,x\in A^{\mathbb{Z}}\right)=1$.
In this case, $\phi_{*}\mu$ is the second marginal of $\lambda$.
\begin{thm}
(Residual Krieger's finite generator theorem) The set $\mathcal{I}\subset\hat{J}$
of joinings supported on a graph of an isomorphism is $G_{\delta}$-dense
in $\hat{J}$.
\end{thm}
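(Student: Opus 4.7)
The overall strategy is to show $\mathcal{I}$ is both $G_\delta$ and dense in $\hat{J}$; since $\hat{J}$ is compact metric and hence Baire, $G_\delta$-density follows.

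For the $G_\delta$ property, a joining $\lambda \in J$ is supported on the graph of a bi-measurable bijection precisely when each coordinate is $\lambda$-almost surely a function of the other, which in turn is equivalent to the vanishing of the two conditional entropies $H_\lambda(y_0 \mid x_{-n}^{n})$ and $H_\lambda(x_0 \mid y_{-n}^{n})$ as $n \to \infty$. For each fixed $n$ these conditional entropies depend only on the finitely many cylinder probabilities $\lambda([u,v])$, hence are continuous functions on $J$ in the weak* topology. Thus
\[
\mathcal{I} = \bigcap_{k=1}^{\infty} \bigcup_{n=1}^{\infty} \bigl\{\lambda : H_\lambda(y_0 \mid x_{-n}^{n}) < 1/k \text{ and } H_\lambda(x_0 \mid y_{-n}^{n}) < 1/k \bigr\}
\]
exhibits $\mathcal{I}$ as a countable intersection of open sets, hence as a $G_\delta$ subset of $\hat{J}$.

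For density, since $\tilde{J}$ is dense in $\hat{J}$ by construction, it suffices to show that every $\lambda_0 \in \tilde{J}$ is a limit of elements of $\mathcal{I}$. The second marginal $\nu_0$ of $\lambda_0$ is ergodic with $h(\nu_0) > h(\mu)$; this strict inequality is the essential source of slack enabling a bi-measurable coding. Given $\epsilon > 0$, I would fix a large window $N$ and, invoking the Shannon--McMillan--Breiman theorem together with the ergodicity of $\lambda_0$, identify sets of $\lambda_0$-typical $x$- and $y$-blocks whose empirical joint distribution matches $\lambda_0$ up to $\epsilon$ on cylinders of length bounded by the resolution of the metric $d$. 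Using a Rokhlin tower of height much larger than $N$ in $(\mathbf{X},\mu)$, I would then define a coding $\phi: \mathbf{X} \to A^{\mathbb{Z}}$ by refining the tower into sub-columns assigned to typical pairs $(u,v)$ with relative weights approximately proportional to $\lambda_0([u,v])$. The strict entropy gap $h(\nu_0) > h(\mu)$ gives enough distinct typical $y$-blocks to realize this refinement injectively over typical $x$-blocks, while reverse injectivity on the typical set makes $\phi^{-1}$ measurable.

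The main obstacle is passing from this single-scale, finitary coding to a genuine global isomorphism. The standard device is to iterate the construction along $N_j \to \infty$ and $\epsilon_j \to 0$, refining the Rokhlin tower and the matching at each stage so that the successive codings form a Cauchy sequence in an appropriate function-space topology whose limit is a bona fide bi-measurable bijection with graph joining $\epsilon$-close to $\lambda_0$. One additionally checks that each approximating joining can be arranged to lie inside $\tilde{J}$, so that the resulting graph joining automatically lies in $\hat{J}$. This iterative scheme is precisely the substance of the residual form of Krieger's finite generator theorem developed in \cite[Section 7.5]{key-13}, and I would follow that reference for the bookkeeping.
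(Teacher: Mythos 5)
The paper does not prove this theorem --- it imports it directly from Rudolph's book \cite[Section 7.5]{key-13}, even noting that it states the result ``not in its full extent.'' So there is no in-paper argument to compare your sketch against; the paper's ``proof'' is the citation. With that caveat, your sketch is a reasonable reconstruction of the two ingredients. The $G_\delta$ step is a clean, self-contained, and correct argument: for a finite alphabet $A$, the conditional entropies $H_\lambda(y_0\mid x_{-n}^n)$ and $H_\lambda(x_0\mid y_{-n}^n)$ are continuous in $\lambda$ (each depends on finitely many cylinder probabilities, and $-p\log p$ extends continuously to $0$) and are nonincreasing in $n$, and their joint vanishing in the limit is exactly the condition that, $\lambda$-a.s., each coordinate sequence determines the other, which by shift-invariance of $\lambda$ upgrades to a shift-equivariant bi-measurable bijection carrying $\mu$ to the second marginal. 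Your decomposition of $\mathcal{I}$ as a countable intersection of open sets is therefore valid.

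The density step is where the real content lies, and your outline --- like the paper --- ultimately defers to the Krieger-type tower construction in the cited reference. One point deserves more care than your phrasing gives it: no graph joining of an isomorphism can itself belong to $\tilde{J}$, because an isomorphism preserves entropy while $\tilde{J}$ demands the second marginal have entropy \emph{strictly} greater than $h(\mu)$. So when you write ``each approximating joining can be arranged to lie inside $\tilde{J}$,'' this must mean the intermediate joinings produced along the iteration, not the limiting graph joining; and it is not automatic that a graph joining of a partial coding (which is a factor, hence has second marginal of entropy $\le h(\mu)$) lands in $\tilde{J}$. Ensuring that the approximants one passes to the limit genuinely live in $\tilde{J}$, so that the limit lands in $\hat{J}$ rather than merely in $J$, is a real issue that the cited construction must and does address; it is not ``bookkeeping.'' Flagging this, your outline is otherwise consistent with what Rudolph's Section 7.5 actually does.
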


We will show that there exists a $G_{\delta}$-dense set of joinings
$\mathcal{O}\subset\hat{J}$, for which the second marginal measure
has no finitary Bernoulli factors. Thus the intersection $\mathcal{I}\cap\mathcal{O}$
is not empty, and the second marginal of any joining in it is an isomorphic
copy of $\left(A^{\mathbb{Z}},\mu\right)$ which has no finitary Bernoulli
factors. This proves the theorem for all processes.

We write a point in the measure space $A^{\mathbb{Z}}\times A^{\mathbb{Z}}=(A\times A)^{\mathbb{Z}}$
as a sequence of tuples: 
\[
x=(x_{1}^{n},x_{2}^{n})_{n\in\mathbb{Z}}
\]
 For a name $w\in A^{*}={\displaystyle \bigcup_{n=1}^{\infty}A^{n}}$
and an integer $M\in\mathbb{N}$, we write
\[
[w^{M}]=\left\{ x\in A^{\mathbb{Z}}:\,x_{0}^{M|w|-1}=\underbrace{ww\cdots w}_{{\scriptscriptstyle M\,\text{times}}}\right\} 
\]
(for $M=1$ we simply write $[w]$). Fix $w\in A^{*}$ and define
$\mathcal{O}(w)\subset\hat{J}$ by
\[
\mathcal{O}(w)=\left\{ \lambda\in\hat{J}:\:\exists M\in\mathbb{N},\,\lambda\left(A^{\mathbb{Z}}\times[w^{\lceil\log(2M)\rceil}]\right)>\frac{1}{M}\right\} .
\]
\begin{prop}
\label{prop:-is-open.}$\mathcal{O}(w)$ is open.
\end{prop}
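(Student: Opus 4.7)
} The key point is that $\mathcal{O}(w)$ is defined as a union over $M\in\mathbb{N}$ of level sets of the functional $\lambda\mapsto\lambda(E_{M})$, where
\[
E_{M}:=A^{\mathbb{Z}}\times[w^{\lceil\log(2M)\rceil}].
\]
So the plan is to show that for each fixed $M$, the set $\mathcal{O}_{M}(w):=\{\lambda\in\hat{J}:\lambda(E_{M})>1/M\}$ is open, and then write $\mathcal{O}(w)=\bigcup_{M}\mathcal{O}_{M}(w)$.

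The first step is to observe that $E_{M}$ is a cylinder set in $(A\times A)^{\mathbb{Z}}$ of depth $n_{M}:=\lceil\log(2M)\rceil\cdot|w|$: it is exactly the disjoint union over all $v\in(A\times A)^{n_{M}}$ whose second coordinates spell out $w^{\lceil\log(2M)\rceil}$ of the basic cylinders $[v]$. Therefore
\[
\lambda(E_{M})=\sum_{v}\lambda([v]),
\]
the sum being over this finite collection of $v$'s.

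The second step is to compare $\lambda(E_{M})$ and $\lambda'(E_{M})$ using the metric $d$. By definition, for any $n$ the $n$-th summand of $d(\lambda,\lambda')$ is $2^{-n}\cdot\frac{1}{2}\sum_{v\in(A\times A)^{n}}|\lambda([v])-\lambda'([v])|$, so singling out $n=n_{M}$ gives
\[
|\lambda(E_{M})-\lambda'(E_{M})|\leq\sum_{v\in(A\times A)^{n_{M}}}|\lambda([v])-\lambda'([v])|\leq 2\cdot 2^{n_{M}}\,d(\lambda,\lambda').
\]
Hence $\lambda\mapsto\lambda(E_{M})$ is continuous on $\hat{J}$, which immediately shows $\mathcal{O}_{M}(w)$ is open: if $\lambda\in\mathcal{O}_{M}(w)$ with slack $\eta:=\lambda(E_{M})-1/M>0$, then any $\lambda'$ with $d(\lambda,\lambda')<\eta/(2\cdot 2^{n_{M}})$ still lies in $\mathcal{O}_{M}(w)$.

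Finally, taking the union over $M$ yields that $\mathcal{O}(w)=\bigcup_{M\in\mathbb{N}}\mathcal{O}_{M}(w)$ is open in $\hat{J}$. There is no real obstacle here; the whole content is the elementary fact that the measures of finite-depth cylinder sets depend continuously on $\lambda$ in the weak$^{*}$ metric, which is built into the definition of $d$.
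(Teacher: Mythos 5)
Your proof is correct and takes essentially the same route as the paper: decompose $\mathcal{O}(w)$ as a union over $M$ of the sets $\{\lambda:\lambda(E_M)>1/M\}$, and show each is open because $\lambda\mapsto\lambda(E_M)$ is continuous. The only cosmetic difference is that the paper invokes the clopen-ness of $E_M$ abstractly (indicator continuous, hence the evaluation map is weak$^*$-continuous), whereas you extract an explicit Lipschitz bound from the definition of the metric $d$; both are fine, and yours is slightly more quantitative but not a different idea.
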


\begin{proof}
For a fixed $M$, the set $A^{\mathbb{Z}}\times[w^{\lceil\log(2M)\rceil}]$
is clopen in $A^{\mathbb{Z}}\times A^{\mathbb{Z}}$, and consequently,
$\mathbf{1}_{A^{\mathbb{Z}}\times[w^{\lceil\log(2M)\rceil}]}$ is
continuous. This in turn implies that $\mathcal{O}(w)$ is a union
of open sets in the weak{*} topology.
\end{proof}
\begin{prop}
\label{prop:Let--be}$\mathcal{O}(w)$ is dense in $\hat{J}$.
\end{prop}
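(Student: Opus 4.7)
Since $\tilde{J}$ is dense in $\hat{J}$ by definition, it suffices to show that $\mathcal{O}(w)$ meets every weak* neighborhood of every $\lambda\in\tilde{J}$. So I fix an ergodic joining $\lambda\in\tilde{J}$ with first marginal $\mu$ and second marginal $\nu$ satisfying $h(\nu)>h(\mu)$, together with a basic weak* neighborhood $U$ of $\lambda$ (determined by some finite collection of cylinders up to length $n_{0}$). I will produce $\lambda'\in\tilde{J}\cap\mathcal{O}(w)\cap U$; since $\tilde{J}\subset\hat{J}$, this is what is needed.

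The construction inserts sparse copies of $w^{N}$ into the second coordinate. Take $M$ large (to be chosen), set $N=\lceil\log(2M)\rceil$ and $p=2/M$. Let $z\in\{0,1\}^{\mathbb{Z}}$ be an i.i.d.\ Bernoulli sequence, independent of $\lambda$, with $\Pr(z_{0}=1)=p$. Given $(x,y)$ drawn from $\lambda$ and such a $z$, define $\tilde{y}$ position-by-position: if there exists $i^{*}\in[i-N|w|+1,i]$ with $z_{i^{*}}=1$, take the least such $i^{*}$ and set $\tilde{y}_{i}=w_{(i-i^{*})\bmod|w|}$; otherwise set $\tilde{y}_{i}=y_{i}$. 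In words, each trigger $z_{i^{*}}=1$ overwrites the next $N|w|$ second-coordinate entries with a copy of $w^{N}$, earlier triggers taking precedence over overlapping later ones. Let $\lambda'$ denote the law of $(x,\tilde{y})$.

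Four properties require checking. \emph{Ergodicity:} the joint system $((x,y),z)$ is a product of the ergodic $\lambda$ with a mixing Bernoulli system, hence ergodic, and $\lambda'$ is its shift-equivariant factor. \emph{Weak* closeness to $\lambda$:} on any fixed window of length $n_{0}$, $\lambda$ and $\lambda'$ differ only when an insertion overlaps the window, an event of probability at most $p(n_{0}+N|w|)=O(|w|\log M/M)$, which tends to $0$. \emph{Mass on $[w^{N}]$:} the event $\tilde{y}_{0}^{N|w|-1}=w^{N}$ contains the event that $z_{0}=1$ and no trigger occurs in $[-N|w|+1,-1]$, giving $\lambda'(A^{\mathbb{Z}}\times[w^{N}])\geq p(1-p)^{N|w|-1}>1/M$ for $M$ large, so $\lambda'\in\mathcal{O}(w)$. \emph{Second-marginal entropy:} the standard bound $h(\nu)\leq h(\tilde{y})+h(y\mid\tilde{y})$ together with $h(y\mid\tilde{y})\leq h(z\mid\tilde{y})+h(y\mid\tilde{y},z)\leq H(p)+pN|w|\log|A|$ (the last term because, given $(\tilde{y},z)$, $y$ is determined outside the insertion set of density $\leq pN|w|$) yields $h(\tilde{y})\geq h(\nu)-H(p)-pN|w|\log|A|$, which remains $>h(\mu)$ for $M$ large.

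For $M$ chosen large enough to meet all three quantitative bounds simultaneously, $\lambda'\in\tilde{J}\cap\mathcal{O}(w)\cap U$, establishing density. The main difficulty is the three-way tension: $\lambda'$ must be close to $\lambda$ (insertions rare), must assign mass exceeding $1/M$ to $[w^{N}]$ (insertions not too rare), and must keep second-marginal entropy above $h(\mu)$ (insertions again rare). The specific definition of $\mathcal{O}(w)$, in which the block length $N$ grows only logarithmically in $M$, is precisely what opens the window where all three constraints can be satisfied as $M\to\infty$.
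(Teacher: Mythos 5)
Your proof is correct, and it follows the same basic strategy as the paper: perturb $\lambda$ by overwriting the second coordinate with sparse copies of $w^{\lceil\log(2M)\rceil}$, so that the resulting $\lambda'$ is close to $\lambda$, has second marginal with entropy still above $h(\mu)$, and assigns mass $>1/M$ to $A^{\mathbb{Z}}\times[w^{\lceil\log(2M)\rceil}]$. The three-way tension you identify at the end is exactly the crux, and the logarithmic choice of $N$ is precisely what the paper exploits. Where you genuinely differ is in the \emph{mechanism} for choosing where to insert: the paper selects a set $B$ of measure $2/M$ as the base of a Rokhlin tower of height $|w|\lceil\log(2M)\rceil$ inside the second-coordinate system and defines $\Phi=\mathrm{Id}\times\phi$ so that $\lambda'=\Phi_{*}\lambda$ is an honest pushforward of $\lambda$, whereas you introduce an external i.i.d.\ Bernoulli trigger $z$ independent of $\lambda$ and take $\lambda'$ to be a factor of $\lambda\times\mathrm{Ber}(p)^{\otimes\mathbb{Z}}$. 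Both are valid: yours avoids the Rokhlin lemma and makes the sparseness and $[w^N]$-mass calculations more transparent (explicit $p(1-p)^{N|w|-1}>1/M$), at the cost of needing the observation that ergodic $\times$ mixing is ergodic so that $\lambda'$ stays in $\tilde{J}$; the paper's version keeps the perturbation internal to $\lambda$. Likewise, you replace the paper's appeal to $\bar{d}$-continuity of entropy with an explicit conditional-entropy (Fano-type) bound $h(y\mid\tilde y)\leq H(p)+pN|w|\log|A|$; this is a fully correct, more elementary quantification of the same estimate. All the quantitative checks go through: $p=2/M$ and $N|w|=O(\log M)$ ensure the weak$^{*}$ perturbation and the entropy loss both vanish while $p(1-p)^{N|w|-1}\to 2/M\cdot(1-o(1))>1/M$.
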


\begin{proof}
Since $\hat{J}$ is the closure of $\tilde{J}$, it suffices to show
that for any $\lambda\in\tilde{J}$ and any $\epsilon>0$, there exists
$\lambda'\in\tilde{J}$ with $d(\lambda,\lambda')<\epsilon$, which
satisfies for some $M$
\[
\lambda'\left(A^{\mathbb{Z}}\times[w^{\lceil\log(2M)\rceil}]\right)>\frac{1}{M}
\]

Fix $\lambda\in\tilde{J}$ and $\epsilon>0$. Let $\nu$ be the second
marginal measure of $\lambda$. Let $M$ be a large integer depending
on $\epsilon$ and $|w|$ that will be determined later, and take
a measurable subset $B\subset A^{\mathbb{Z}}$ which satisfies:

(i) $\nu(B)=\frac{2}{M}$.

(ii) The sets $B,SB,...,S^{|w|\cdot\lceil\log(2M)\rceil-1}B$ are
pairwise disjoint.

Assuming $M$ is large enough so that $|w|\cdot\lceil\log(2M)\rceil<M/2$,
one can always pick such a $B$: For instance, let $0<\alpha<1$ be
such that $|w|\cdot\lceil\log(2M)\rceil<\alpha M/2$, and take a Rokhlin
tower of height $|w|\cdot\lceil\log(2M)\rceil$ with total measure
greater than $\alpha$. Then its base is of measure greater than $\frac{2}{M}$,
and any subset of its base of measure $\frac{2}{M}$ will work for
that $B$. 

Define $\Phi:A^{\mathbb{Z}}\times A^{\mathbb{Z}}\rightarrow A^{\mathbb{Z}}\times A^{\mathbb{Z}}$
by $\Phi:=Id_{A^{\mathbb{Z}}}\times\left(\phi_{n}\right)_{n\in\mathbb{Z}}$,
where for any $n$, $\phi_{n}:=\phi_{0}\circ S^{n}$, and for any
$a=(a_{n})_{n\in\mathbb{Z}}\in A^{\mathbb{Z}}$,
\[
\phi_{0}(a):={\displaystyle \begin{cases}
w_{j} & a\in S^{m|w|+j}B\hspace{1em}(0\leq j<|w|,0\leq m<\log2M)\\
a_{0} & a\in A^{\mathbb{Z}}\backslash{\displaystyle {\textstyle {\displaystyle \bigcup_{i=0}^{|w|\lceil\log2M\rceil-1}}}}S^{i}B
\end{cases}}
\]

Let $\lambda'=\Phi_{*}\lambda$, and denote by $\nu'$ its second
marginal (clearly, $\mu$ is its first marginal). We claim that $\lambda'\in\mathcal{O}(w)$,
and moreover, if $M$ is picked to be large enough, one has $d(\lambda,\lambda')<\epsilon$:
First notice that $\lambda'$ is ergodic, since it is the pushforward
of an ergodic measure. In addition, for any $y\in A^{\mathbb{Z}}\times B$,
one has that $\Phi(y)\in A^{\mathbb{Z}}\times[w^{\lceil\log(2M)\rceil}]$,
thus
\[
\lambda'(A^{\mathbb{Z}}\times[w^{\lceil\log(2M)\rceil}])\geq\lambda(A^{\mathbb{Z}}\times B)=\frac{2}{M}>\frac{1}{M}
\]
It remains to show that $h(\nu')>h(\mu)$ and that $d(\lambda,\lambda')<\epsilon$.
For the entropy inequality we use the $\bar{d}$-distance between
$\nu$ and $\nu'$, while for the latter inequality we use the $\bar{d}$-distance
between $\lambda$ and $\lambda'$:
\begin{align}
\bar{d}(\nu,\nu'): & =\min\left\{ \int_{A^{\mathbb{Z}}\times A^{\mathbb{Z}}}\mathbf{1}\{x_{0}\neq y_{0}\}d\rho(x,y):\,\rho\text{ a joining of \ensuremath{\nu} and \ensuremath{\nu'}}\right\} \nonumber \\
 & \leq\int_{A^{\mathbb{Z}}}\mathbf{1}\{x_{0}\neq\phi_{0}(x)\}d\nu(x)\label{eq:dbar}\\
 & \leq\nu\left(\bigcup_{i=0}^{|w|\lceil\log2M\rceil-1}S^{i}B\right)\nonumber \\
 & =\frac{2|w|\cdot\lceil\log(2M)\rceil}{M}\nonumber 
\end{align}
 which tends to zero as $M\longrightarrow\infty$. Since process entropy
is continuous with respect to $\bar{d}$, one has $h(\nu')\overset{{\scriptscriptstyle M\rightarrow\infty}}{\longrightarrow}h(\nu)$,
and in particular, for large enough $M$, $h(\nu')>h(\mu)$. Thus
$\lambda'\in\mathcal{O}(w)$. Similarly to (\ref{eq:dbar}), we have
\begin{align*}
\bar{d}(\lambda,\lambda'): & =\min\left\{ \int_{\left(A^{\mathbb{Z}}\times A^{\mathbb{Z}}\right)^{2}}\mathbf{1}\{x_{0}\neq y_{0}\}d\rho(x,y):\,\rho\text{ a joining of \ensuremath{\lambda} and \ensuremath{\lambda'}}\right\} \\
 & \leq\int_{A^{\mathbb{Z}}\times A^{\mathbb{Z}}}\mathbf{1}\{x_{0}\neq\Phi_{0}(x)\}d\lambda(x)\\
 & \leq\lambda\left(\bigcup_{i=0}^{|w|\lceil\log2M\rceil-1}\left(A^{\mathbb{Z}}\times S^{i}B\right)\right)\\
 & =\frac{2|w|\cdot\lceil\log(2M)\rceil}{M}
\end{align*}
 which tends to zero as $M\longrightarrow\infty$. Since $d$ is continuous
with respect to $\bar{d}$, taking large enough $M$ gives $d(\lambda,\lambda')<\epsilon$.
\end{proof}
By propositions \ref{prop:-is-open.} and \ref{prop:Let--be}, the
set 
\[
\mathcal{O}=\bigcap_{w\in A^{*}}\mathcal{O}(w)
\]
 is $G_{\delta}$-dense.
\begin{prop}
For any $\lambda\in\mathcal{O}$, its second marginal measure has
no finitary factors which are i.i.d.
\end{prop}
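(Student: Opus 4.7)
The plan is: suppose for contradiction that $\phi:(A^{\mathbb{Z}},\nu)\to(B^{\mathbb{Z}},p^{\times\mathbb{Z}})$ is a finitary factor map onto an i.i.d.\ process of entropy $h'>0$, where $\nu$ denotes the second marginal of $\lambda\in\mathcal{O}$. The target is to exhibit a single word $w\in A^{*}$ with $\nu([w^{N}])<2^{-N}$ for every $N\geq1$; since $\mathcal{O}\subseteq\mathcal{O}(w)$ requires the existence of some $N$ with $\nu([w^{N}])>2^{-(N-1)}$, this delivers the contradiction.

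The engine is a version of Lemma~\ref{lem:Suppose-that-An} that does not assume ergodicity of $\nu$: for every $\epsilon>0$ there should exist $L_{0}\in\mathbb{N}$ and a set $X'\subseteq A^{\mathbb{Z}}$ with $\nu(X')>0$ such that
\[
\nu\left(\bigcap_{k=0}^{r-1}S^{k\ell}[x_{0}^{\ell-1}]\right)\leq 2^{-(1-\epsilon)h'r\ell}
\]
holds for all $x\in X'$, $\ell\geq L_{0}$, and $r\geq1$. Granted such a bound, I would fix $\epsilon<1/2$, choose $\ell\geq L_{0}$ so large that $(1-\epsilon)h'\ell>1$, pick any $x\in X'$, and set $w:=x_{0}^{\ell-1}$. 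Then for every $N\geq1$,
\[
\nu([w^{N}])\leq 2^{-(1-\epsilon)h'N\ell}<2^{-N}<2^{-(N-1)},
\]
which produces the desired contradiction with $\lambda\in\mathcal{O}(w)$.

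The principal obstacle is removing the ergodicity hypothesis from Lemma~\ref{lem:Suppose-that-An}. My plan is to run the proof of that lemma essentially verbatim, absorbing two adjustments that compensate for the loss of ergodicity. Because $p^{\times\mathbb{Z}}$ is extremal in the simplex of shift-invariant probability measures, the ergodic decomposition $\nu=\int\nu_{\omega}\,dP(\omega)$ satisfies $\phi_{*}\nu_{\omega}=p^{\times\mathbb{Z}}$ for $P$-a.e.\ $\omega$; hence the conditional expectation of $\mathbf{1}\{\phi_{0}=b\}$ on the invariant $\sigma$-algebra is still the constant $p_{b}$, and the frequency $\frac{1}{N}\sum_{n<N}\mathbf{1}\{\phi(x)_{n}=b\}$ converges $\nu$-a.e.\ to $p_{b}$ as in the ergodic case. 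The only surviving difficulty is that $\frac{1}{N}\sum_{n<N}\mathbf{1}\{S^{n}x\in X_{M}\}$ converges only to the $x$-dependent value $\nu_{\omega(x)}(X_{M})$; however, after choosing $M$ large enough that $\nu(X_{M})\geq 1-0.1\bar{\epsilon}^{2}$, Markov's inequality applied to $\omega\mapsto 1-\nu_{\omega}(X_{M})$ supplies a $\nu$-measurable set of measure at least $1-0.1\bar{\epsilon}$ on which $\nu_{\omega(x)}(X_{M})\geq 1-0.1\bar{\epsilon}$. Intersecting with the full-measure set where both ergodic averages converge and invoking Egoroff's theorem furnishes the positive-measure $X'$ and a uniform $L_{0}$ on which the frequency hypotheses used in Lemma~\ref{lem:Suppose-that-An} hold within the required tolerances. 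The remaining arithmetic — bounding $p^{\times\mathbb{Z}}([\phi(x_{0}^{\ell-1})])$ by $2^{-(1-\epsilon)\ell h'}$ and using i.i.d.\ independence across disjoint coordinate blocks to obtain the $r$-fold product bound — then goes through unchanged, with $\nu$ replacing $\mu$ throughout.
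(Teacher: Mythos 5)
Your proposal is essentially correct, but it takes a genuinely different and considerably heavier route than the paper. The paper's own argument uses neither Lemma \ref{lem:Suppose-that-An} nor any entropy or ergodic-theoretic input: it picks a symbol $a$ of the target alphabet with $0<p_{a}\leq\frac{1}{2}$, uses finitariness to produce a single word $w$ of odd length such that seeing $w$ centered at coordinate $n$ forces $\phi(x)_{n}=a$ almost surely, and then observes that $[w^{\lceil\log(2M)\rceil}]$ forces $\lceil\log(2M)\rceil$ distinct output coordinates of the i.i.d.\ image to equal $a$, whence $\nu([w^{\lceil\log(2M)\rceil}])\leq p_{a}^{\lceil\log(2M)\rceil}\leq\frac{1}{2M}$ for every $M$, contradicting $\lambda\in\mathcal{O}(w)$. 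In particular the possible non-ergodicity of $\nu$ (a real issue, since elements of $\hat{J}$ need not be ergodic, and you were right to flag it) simply never enters. Your route instead manufactures the word from a non-ergodic extension of Lemma \ref{lem:Suppose-that-An}; the extension you sketch is sound --- extremality of $p^{\times\mathbb{Z}}$ does force $\phi_{*}\nu_{\omega}=p^{\times\mathbb{Z}}$ on almost every ergodic component, and Markov plus Egoroff does yield a positive-measure set with a uniform $L_{0}$ --- and your translation of the definition of $\mathcal{O}(w)$ into the existence of some $N$ with $\nu([w^{N}])>2^{-(N-1)}$, contradicted by $\nu([w^{N}])\leq 2^{-(1-\epsilon)h'N\ell}<2^{-N}$ once $(1-\epsilon)h'\ell>1$, is a correct reading. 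What your approach buys is a stronger statement (every sufficiently long word read off a positive-measure set of points witnesses $\lambda\notin\mathcal{O}(w)$) and a reusable non-ergodic version of the lemma; what it costs is the entire ergodic-decomposition apparatus, which the intended proof shows to be unnecessary, since the one cylinder handed to you directly by the finitary hypothesis already does the job.
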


\begin{proof}
Let $\lambda\in\mathcal{O}$. Denote by $\nu$ its second marginal
measure, and suppose that there is some finitary factor map $\phi$
from $(A^{\mathbb{Z}},\nu)$ to some i.i.d. process $\mathbf{\mathcal{B}}=(\{1,...,s\}^{\mathbb{Z}},p^{\times\mathbb{Z}},\sigma)$,
where $p=(p_{1},...,p_{s})$ is a probability measure on the state
space $\mathbf{B}=\{1,...,s\}$. Evidently, one of the symbols $b\in B$
satisfies $0<p_{b}\leq\frac{1}{2}$. Since $\phi$ is finitary, there
is some finite word $w\in A^{*}$ of odd length $2k+1$, so that $\left\{ x:x_{-k}^{k}=w\right\} \subset\phi_{0}^{-1}(a)$.
Thus, for any $x\in A^{\mathbb{Z}}$ and $n\in\mathbb{Z}$, if $x_{n-k}^{n+k}=w$,
then $\phi_{n}(x)=a$. Thus for all $M$,
\[
\nu([w^{\lceil\log(2M)\rceil}])\leq p^{\times\mathbb{Z}}(a)^{\lceil\log(2M)\rceil}\leq\frac{1}{2M}
\]
On the other hand, for some $M$ (by definition of $\mathcal{O}(w)$),
\[
\nu([w^{\lceil\log(2M)\rceil}])=\lambda\left(A^{\mathbb{Z}}\times[w^{\lceil\log(2M)\rceil}]\right)>\frac{1}{M}
\]
which is a contradiction.
\end{proof}

\section*{Open Problems}
\begin{enumerate}
\item Does there exists a finitary factor of an i.i.d. process non of whose
finitary factors is an i.i.d. process (regardless of the factor's
entropy)?
\item (Suggested by Tim Austin) Fix an alphabet $A$ and an i.i.d. process
$\mathcal{B=}(B^{\mathbb{Z}},p^{\times\mathbb{Z}})$. Let $\mathfrak{M}$
be the space of all shift invariant measures on $A^{\mathbb{Z}}$,
endowed with the weak{*} topology. 
\begin{enumerate}
\item Is $\left\{ \mu\in\mathfrak{M}:\,(A^{\mathbb{Z}},\mu)\text{ is finitarily isomorphic to \ensuremath{\mathcal{B}}}\right\} $
a Borel subset in $\mathfrak{M}$?
\item Is $\left\{ \mu\in\mathfrak{M}:\,(A^{\mathbb{Z}},\mu)\text{ finitarily factors onto \ensuremath{\mathcal{B}}}\right\} $
a Borel subset in $\mathfrak{M}$?
\end{enumerate}
\end{enumerate}
\begin{acknowledgement*}
This paper is part of the author\textquoteright s Ph.D. thesis, conducted
under the guidance of Professor Michael Hochman, whom I would like
to thank for all his support and advice. Also thanks to Tim Austin
and Lewis Bowen for valuable discussions.
\end{acknowledgement*}

\lyxaddress{uriel.gabor@gmail.com}

\lyxaddress{Einstein Institute of Mathematics}

\lyxaddress{The Hebrew University of Jerusalem}

\lyxaddress{Edmond J. Safra Campus, Jerusalem, 91904, Israel}

\begin{thebibliography}{10}
\bibitem{key-1}T. Austin. Measure concentration and the weak Pinsker
property. Publ. Math. Inst. Hautes Etudes Sci. 128 (2018), 1\textendash 119.

\bibitem{key-14}M. Keane and M. Smorodinsky, A class of finitary
codes, Israel J. Math. 26 (1977), 352-371.

\bibitem{key-15}M. Keane and M. Smorodinsky, Bernoulli schemes of
the same entropy are finitarily isomorphic, Ann. of Math. 109 (1979),
397\textendash 406.

\bibitem{key-11}Andrew Lazowski, Stephen M. Shea, Finite factors
of Bernoulli schemes and distinguishing labelings of directed graphs,
Electron. J. Combin. 19 (2012).

\bibitem{key-6}D. S. Ornstein. Bernoulli shifts with the same entropy
are isomorphic. Advances in Math. 4 (1970): 337-352.

\bibitem{key-2}D. S. Ornstein. Factors of Bernoulli shifts are Bernoulli
shifts. Advances in Math. 5 (1970): 349-364.

\bibitem{key-7}D. Rudolph, (1981). A characterization of those processes
finitarily isomorphic to a Bernoulli shift. In Ergodic Theory and
Dynamical systems, I (College Park, Md. 1979-80). Progr. Math., Vol
10. Birkhauer Boston, Mass. 1-64.

\bibitem{key-13}D. J. Rudolph, Fundamentals of measurable dynamics,
in Oxford Science Publications (The Clarendon Press Oxford University
Press, New York, 1990).

\bibitem{key-12}S. Shea, (2013). Finitarily Bernoulli factors are
dense. Fund. Math. 223 49\textendash 54.

\bibitem{key-3}J. G. Sinai. A weak isomorphism of transformations
with invariant measure. Mat. Sb. 63 (1964): 23-42; Amer. Math. Soc.
Translations, ser. 2, vol. 57 (1966): 123-143.

\bibitem{key-8}M. Smorodinsky (1992) Finitary isomorphism of m-dependent
processes, symbolic dynamics and its applications, Contemp. Math.
135, 373-376.
\end{thebibliography}
\end{document}